\newtheorem{theorem}{Theorem}
\newtheorem{definition}{Definition}
\newtheorem{corollary}{Corollary}
\begin{document}
\renewcommand{\refname}{References}
\renewcommand{\proofname}{Proof.}
\renewcommand{\figurename}{Fig.}

\thispagestyle{empty}

\title[Сentral orders in simple superalgebras]{Сentral orders in simple finite dimensional superalgebras}
\author{{A.S. Panasenko}}%
\address{Alexander Sergeevich Panasenko
\newline\hphantom{iii} Sobolev Institute of Mathematics,
\newline\hphantom{iii} pr. Koptyuga, 4,
\newline\hphantom{iii} 630090, Novosibirsk, Russia.
\newline\hphantom{iii} Novosibirsk State University,
\newline\hphantom{iii} Universitetskiy pr., 1,
\newline\hphantom{iii} 630090, Novosibirsk, Russia}
\email{a.panasenko@g.nsu.ru}%
\thanks{\sc Panasenko, A.S.,
Сentral orders in simple finite dimensional superalgebras}
\thanks{\copyright \ 2020 Panasenko A.S}

\vspace{1cm}
\maketitle {\small
\begin{quote}
\noindent{\sc Abstract. } The well-known Formanek's module finiteness theorem states that every unital prime PI-algebra (i.e. a central order in a matrix algebra by Posner's theorem) embeds into a finitely generated module over its center. An analogue of this theorem for alternative and Jordan algebras was earlier proved by V.N.~Zhelyabin and the author. In this paper we discuss this problem for associative, classical Jordan and some alternative superalgebras. 
\medskip

\noindent{\bf Keywords:} central order, associative superalgebra, alternatve superalgebra, Jordan superalgebra, simple superalgebra.
 \end{quote}
}

\section{Introduction}

The study of simple and prime (super)algebras in different useful varieties is an important part of the structure theory of these varieties. The prime algebras with polynomial identities are especially interesting. The first point of this theory is Posner's Theorem \cite{Posner}. It states that all associative prime PI-rings coincide with central orders in matrix algebras over finite dimensional division algebras. After this result, the similar theory was constructed in other varieties of nearly associative algebras. M.~Slater \cite{Slater} proved that an alternative prime non-degenerate algebra is a central order in a matrix algebra or in the Cayley--Dickson algebra. Later, E.I.~Zel'manov \cite{Zelm} obtained a description of prime non-degenerate Jordan PI-algebras: they are central orders in finite dimensional central simple Jordan algebras or in the algebra of a non-degenerate symmetric bilinear form. 

For alternative superalgebras the similar result was obtained by I.P.~Shestakov \cite{Shest}. He proved that every alternative non-associative prime superalgebra under some restriction on the even part (for example, if it is non-degenerate) is a central order in a finite dimensional alternative superalgebra.

These results motivate one to study central orders in simple (super)algebras more carefully. For example, E.~Formanek in \cite{Formanek} proved that prime PI-algebra embeds into a finitely generated module over its center. It is important to notice that the proof of this result is essentially based on the Posner's Theorem.

In paper \cite{Panasenko} the author proved that an alternative prime non-degenerate algebra embeds into a finitely generated module over its center. In \cite{Zhel1}, V.N.~Zhelyabin and the author showed that a central order in a finite dimensional simple Jordan algebra embeds into a finitely generated module over its center.

It should be noticed that the study of Jordan superalgebras have been continued. For example, in works \cite{Zhel2,Zhel3,Zhel4} V.N.~Zhelyabin and A.S.~Zakharov studied simple Jordan superalgebras with an associative even part. In works \cite{Gomez1,Gomez2,Gomez3,Gomez4} F.A.~Gomez Gonzalez proved analogues of the Wedderburn Principal Theorem for the cases, when the algebra $A/N$ is one of the classical simple Jordan superalgebras ($N$ is the solvable radical of a superalgebra $A$).

In this paper, we study central orders in simple finite dimensional associative, alternative and Jordan superalgebras. We prove an analogue of Formanek's Theorem for associative superalgebras and (with some restrictions) for alternative and Jordan superalgebras.

\bigskip

\section{Central orders in simple finite dimensional associative superalgebras}

The definition of $\mathcal{M}$-superalgebra in an arbitrary variety $\mathcal{M}$ of nonassociative algebras may be found in a lot of papers (for example, in \cite{RacineZelm}).

Let us remind some notations. If $A$ is a (super)algebra and $x,y,z\in A$ then $$(x,y,z)=(xy)z-x(yz), \qquad [x,y]=xy-yx.$$

%\begin{definition}
%	Let $A=A_0 + A_1$ be a $Z_2$-graded algebra and $G=G_0 + G_1$ be the Grassman algebra. Then a subalgebra $G(A)=(G_0\otimes A_0)+(G_1\otimes A_1)$ of $G\otimes A$ is called a \textbf{Grassman envelope} of an algebra $A$.
%\end{definition}

%\begin{definition} 
%	Let $\mathcal{M}$ be a variety of $k$-algebras. Then a $Z_2$-graded algebra $A=A_0 + A_1$ is called $\mathcal{M}$\textbf{-superalgebra}, if its Grassman envelope $G(A)$ is an algebra from the variety $\mathcal{M}$. 
%\end{definition}

%For example, a class of associative superalgebras coincides with a class of $Z_2$-graded associative algebras. 

\begin{definition}
	If $A$ is a (super)algebra then a subalgebra $$Z(A)=\{x\in A\mid (x,a,b)=(a,x,b)=(a,b,x)=[a,x]=0 \quad\forall a,b\in A\}$$
	is called a \textbf{center} of a (super)algebra $A$.
\end{definition}

%\begin{definition}
%	Let $A$ be a $Z_2$-graded algebra. Its \textbf{supercenter} is an even part $Z(A)_0$ of a center of an algebra $A$.
%\end{definition}

%\begin{definition}
%	Let $A$ be $Z_2$-graded algebra and %its supercenter 
%	$Z=Z(A)_0$ is not zero and does not have zero divisors of an algebra $A$. Then, like in non-graded case, we can define a $Z_2$-graded algebra $Z^{-1}A$, and it is called a \textbf{supercentral closure} of a $Z_2$-graded algebra $A$. An algebra $A$ is called a \textbf{supercentral order}.
%\end{definition}

%Note: if $A$ is a $\mathcal{M}$-superalgebra, then its central order (closure) is a $\mathcal{M}$-superalgebra too. 

\noindent\textbf{Example 1.} Let $B=M_{n+m}(A)$ be the $(n+m)\times (n+m)$ matrix algebra with elements from an associative $k$-algebra $A$. Then $B$ is a $Z_2$-graded algebra with the following grading for every $X\in B$:
$$X=X_0+X_1, \qquad X_0=\begin{pmatrix}
	D_1 & 0 \\
	0 & D_2 
	\end{pmatrix}, \qquad
X_1=\begin{pmatrix}
	0 & D_3 \\
	D_4 & 0 
	\end{pmatrix},
$$
where $D_1\in M_{n}(A)$, $D_2\in M_m(A)$, $D_3\in M_{n,m}(A)$, $D_4\in M_{m,n}(A)$.

%\noindent\textbf{Example 1.} Consider a matrix algebra $M_{n}(A)$, where $A$ is an associative algebra over a field $k$. Let $n=p+q$, where $p,q\in\mathbb{N}$. Then we have $M_n(A)=M_0+M_1$, where $M_0$ consists of all matrices $X$, so that $X_{ij}=0$ if $i\le p < j$ and $X_{ij}=0$ if $j\le p < i$, and $M_1$ consists of all matrices $X$, so that $X_{ij}=0$ if $i,j\le p$ and $X_{ij}=0$ if $i,j>p$. The decomposition $M_n(A)=M_0+M_1$ sets a $Z_2$-grading on an algebra $M_n(A)$. A superalgebra with this grading is denoted by $M_{p+q}(A)$.

\medskip

It is easy to notice that in the example above $M_0\simeq M_p(A)\oplus M_q(A)$, where $\oplus$ denotes the direct sum of ideals of $M_0$.

Let $B$ be an algebra. We will use a notation $C_B(u)=\{x\in B\mid xu=ux\}$ for the centralizer of an element $u$ in $B$. Similarly, $S_B(u)=\{x\in B\mid xu=-ux\}$.

\begin{definition}
	A superalgebra $A$ is called \textbf{central} over a field $k$, if $Z(A)_0$ is isomorphic to $k$.
\end{definition}

M.L.~Racine in \cite{Racine} proved that an associative Artinian simple superalgebra is an algebra of linear maps of some vector superspace over some division superalgebra.  In the same paper all division superalgebras were described. Let us give a part of this description for finite dimensional algebras:

\noindent\textbf{Theorem} (M.L. Racine, \cite{Racine}).
	{\it If $D=D_0+D_1$ is a central finite dimensional division superalgebra over a field $k$, then one and only one of the following statements holds (everywhere below $\varepsilon$ stands for some division algebra over a field $k$):
	
	1) $D=D_0=\varepsilon$, $D_1=0$;
	
	2) $D=\varepsilon\otimes_k k[u]$, $u^2=\lambda \in k$, $\lambda\neq 0$, $D_0=\varepsilon\otimes_k k1$, $D_1=\varepsilon\otimes_k ku$;
	
	3) $D=\varepsilon$, $D_0=C_{\varepsilon}(u)$, where $k[u]\subset \varepsilon$ is a quadratic extension of the field $k$;
	
	4) $D=M_2(\varepsilon)=\varepsilon\otimes_k M_2(k)$, $D_0=\varepsilon\otimes_k k[u]$, $D_1=\varepsilon\otimes_k k[u]w$, where $u=\begin{pmatrix}
	0 & 1 \\
	\lambda & p 
	\end{pmatrix}$, $w=\begin{pmatrix}
	1 & 0 \\
	p & -1
	\end{pmatrix}$, and $k[u]$ is not contained in $\varepsilon$. Here $p=0$ if $\mathrm{char}(k)\neq 2$ and $p=1$ if $\mathrm{char}(k)=2$.}

We will prove the following theorem by using this classification.

\begin{theorem}
	Let $B=B_0 + B_1$ be a unital associative superalgebra, $Z=Z(B)_0$ does not have zero divisors of $B$, $A=Z^{-1}B$ is the central closure of a superalgebra $B$. If the superalgebra $A$ is simple and finite dimensional, then $B$ embeds into a free finitely generated $Z$-module.
\end{theorem}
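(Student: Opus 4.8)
The plan is to carry out, for superalgebras, one of the standard proofs of Formanek's theorem: reduce everything to a dual-basis argument for a suitable nondegenerate associative trace form. Since $Z$ has no zero divisors of $B$, the localization map $B\to A=Z^{-1}B$ is injective, so it suffices to place $B$ inside a free finitely generated $Z$-submodule of $A$. By hypothesis $A$ is a simple finite dimensional associative superalgebra over $F:=Z^{-1}Z$, and it is moreover central over $F$: if $z^{-1}b\in Z(A)_0$ with $b\in B_0$ and $z\in Z$, then $b$ is central in $B$, so $b\in Z(B)_0=Z$ and $z^{-1}b\in F$. Set $N=\dim_F A$ and identify $Z$ with $Z\cdot 1\subseteq F\cdot 1\subseteq A$.

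The whole statement reduces to the following: produce a nondegenerate associative $F$-bilinear form $\tau$ on $A$ and a nonzero $c\in Z$ with $c\,\tau(B,B)\subseteq Z$. Indeed, choose a homogeneous $F$-basis $e_1,\dots,e_N$ of $A$ with all $e_i\in B$ (rescale a homogeneous basis of $A$) and let $e^1,\dots,e^N\in A$ be its $\tau$-dual basis. For $x\in B$, writing $x=\sum_j\xi_j e^j$ with $\xi_j\in F$ gives $\xi_i=\tau(e_i,x)$, and $c\,\xi_i=c\,\tau(e_i,x)\in Z$ since $e_i x\in B$. Hence $cB\subseteq\bigoplus_j Z e^j$, a free $Z$-module of rank $N$ (the $e^j$ being $F$-independent), so $B\hookrightarrow\bigoplus_j Z\,(c^{-1}e^j)\cong Z^N$.

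To construct $\tau$ and $c$ I would use Racine's classification. In all of Racine's types except type 2 with $u^2$ a square in characteristic $\neq 2$, $A$ is simple as an ungraded algebra with center a finite extension $L/F$, and one takes $\tau(x,y)=\theta\bigl(\mathrm{Trd}_A(xy)\bigr)$ for the reduced trace $\mathrm{Trd}_A\colon A\to L$ (whose bilinear form is nondegenerate in every characteristic) and any nonzero $F$-linear functional $\theta\colon L\to F$; in the remaining type $A\cong S\times S$ with $S$ central simple over $F$ and the grading swapping the factors, and one takes $\tau\bigl((s_1,s_2),(s_1',s_2')\bigr)=\mathrm{Trd}_S(s_1 s_1')+\mathrm{Trd}_S(s_2 s_2')$. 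In every case $\tau$ is associative and nondegenerate, and $t(x):=\tau(x,1)$ satisfies $\tau(x,y)=t(xy)$. Since $A$ is a central simple superalgebra, the sign-twisted multiplication map $A\otimes_F A^{\mathrm{op}}\to\mathrm{End}_F(A)$ is an isomorphism of superalgebras, so the operator $x\mapsto t(x)\cdot 1$ has the form $x\mapsto\sum_i(-1)^{|b_i|\,|x|}a_i x b_i$ for suitable homogeneous $a_i,b_i\in A$. Clearing a common denominator, write $a_i=z^{-1}a_i'$, $b_i=z^{-1}b_i'$ with $z\in Z\setminus\{0\}$ and $a_i',b_i'\in B$; then for homogeneous $x\in B$ one has $z^2\,t(x)\cdot 1=\sum_i(-1)^{|b_i|\,|x|}a_i' x b_i'\in B\cap F\cdot 1=Z$, because an element of $B$ proportional to $1$ is even and central in $B$, hence lies in $Z(B)_0=Z$. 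By additivity $z^2\,t(B)\subseteq Z$, so $c:=z^2$ works.

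The step I expect to be the main obstacle is this last construction — obtaining the nondegenerate associative form and the identity $t(x)\cdot 1=\sum_i\pm a_i x b_i$ uniformly over Racine's four types. The delicate points are the type where $A$ is not simple as an ungraded algebra (type 2 with a square parameter in characteristic $\neq 2$, a product of two matrix algebras interchanged by the grading), which forces one to use the \emph{super} tensor product $A\otimes_F A^{\mathrm{op}}\cong\mathrm{End}_F(A)$ rather than the ordinary one, and small characteristic, where ordinary trace forms degenerate, so one must work with reduced traces and, when the ungraded center is inseparable over $F$, post-compose with a linear functional; everything else is routine once these are handled.
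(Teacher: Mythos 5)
Your overall strategy --- a dual basis with respect to a nondegenerate associative form, combined with writing the operator $x\mapsto t(x)\cdot 1$ as an element of the (graded) multiplication algebra and clearing denominators --- is genuinely different from the paper's. The paper reduces Racine's types 1, 3 and 4 to the ungraded Formanek theorem (after checking that the ungraded center of $A$ is just $F$) and treats type 2 by explicit computations with matrix units, followed by a scalar extension to a maximal subfield of $\varepsilon$ to reduce the general division algebra to the split case. Your route works cleanly for types 1, 3, 4 in any characteristic (there $A$ is ungraded central simple over $F$, so the ordinary isomorphism $A\otimes_FA^{\mathrm{op}}\cong\mathrm{End}_F(A)$ suffices), and for type 2 in characteristic $\neq 2$, where the supercenter is $F$ and the super tensor product argument is legitimate.

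There is, however, a genuine gap in Racine's type 2 over a field of characteristic $2$, a case that is within the scope of the theorem (the quoted classification explicitly covers $\mathrm{char}(k)=2$, and the paper's own case-2 computation is characteristic-free). In characteristic $2$ the Koszul signs vanish, so your ``sign-twisted'' map is the ordinary multiplication map $A\otimes_FA^{\mathrm{op}}\to\mathrm{End}_F(A)$. Its image consists of operators linear over the ungraded center $C=k[u]$ of $A=M_n(\varepsilon\otimes_kk[u])$, which is two-dimensional over $F$: the odd element $u$ is ungraded-central, and in characteristic $2$ it even lies in the supercenter (the supercommutator $um+mu=2um$ vanishes), so $A$ is not central simple in the graded sense and $A\hat\otimes_FA^{\mathrm{op}}\not\cong\mathrm{End}_F(A)$. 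Consequently the image is $\mathrm{End}_C(A)\subsetneq\mathrm{End}_F(A)$, and no operator $x\mapsto t(x)\cdot 1$ with $t\neq 0$ an $F$-linear functional taking values in $F\cdot 1$ is $C$-linear; your key identity $t(x)\cdot 1=\sum_i\pm a_ixb_i$ is therefore false here. (A secondary defect in the same case: if $u^2=\lambda$ is a square in characteristic $2$, then $u-\sqrt{\lambda}$ is a nonzero central nilpotent, $A$ is not semisimple as an ungraded algebra, and the reduced trace you invoke does not exist --- one must replace it by a Frobenius functional on $k[u]$.) The repair requires an extra step precisely where the paper works hardest: use the multiplication algebra over $C$ to land the $C$-valued coordinates of $x\in B$ in $B\cap C=Z(B)$, and then multiply by an integral odd central element $v=zu\in B$ to push the odd components into $Z$; this is exactly what the paper's relation $z^2(\alpha_{ij}+\beta_{ij}u)v=z^2(\alpha_{ij}v+\beta_{ij}\mu)\in Z(B)$ achieves, after which $B$ embeds into $\sum Ze_{ij}+\sum Zue_{ij}$ up to a fixed denominator.
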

\begin{proof}
	 In \cite{Formanek} E. Formanek proved this theorem for the case $B_1=A_1=0$. By Racine's Theorem \cite{Racine} $A\simeq End_D(V)$ (algebra isomorphism), where $D$ is a division superalgebra and $V$ is a superspace over $D$. We will use the following notations: $k=Z^{-1}Z$, $k_1$ means the odd part of the center of the superalgebra $A$. 

1) Case $D=D_0=\varepsilon$. Then $End_D(V)\simeq M_{p+q}(D)$. $Z(M_{p+q}(D))\subset A_0$ implies $Z(B)\subset B_0$ and $A\simeq Z(B)^{-1}B$. Since $M_{p+q}(D)$ is a simple algebra, we have our statement by Formanek' Theorem

2) Case $D=\varepsilon\otimes_k k[u]$, $u^2=\lambda\in k$ and $\lambda\neq 0$. Then $A\simeq M_n(D_0)+M_n(D_1)$. 

Firstly, we will consider the case $\varepsilon=k$, i.e. $D\simeq k[u]$. Let $a\in B$. Then $a=\sum\limits_{i,j=1}^{n} (\alpha_{ij}+\beta_{ij}u)e_{ij}$. There exists $z\in Z$, $z\neq 0$, such that $e_{ij}=f_{ij}/z$, $u=v/z, \lambda=\mu/z$, where $f_{ij}, v\in B$, $\mu, z\in Z$. Note that $$\sum\limits_{t=1}^{n}f_{ti}af_{jt}=z^2(\alpha_{ij}+\beta_{ij}u)\in B\cap Z(A)=Z(B),$$ that imply $\alpha_{ij}z^2\in Z$. But $$z^2 (\alpha_{ij}+\beta_{ij}u) v = z^2(\alpha_{ij}v+\beta_{ij}\mu)\in B\cap Z(A)=Z(B),$$ hence $\beta_{ij}\mu z^2\in Z$. It means that $B$ embeds into a free finitely generated $Z$-module $\sum Ze_{ij} + \sum Zu e_{ij}$. 

Now, let $\varepsilon$ be an arbitrary  finite dimensional division algebra. Then, according to (\cite{Herstein}, p.105), $\varepsilon$ contains a maximal subfield $P=k[b]$, $m=\dim_k P$. Moreover, we can assume that $b\in B_0$. Note, that 
\begin{multline*}Z^{-1}(B\otimes_Z Z[b])\simeq M_n(D)\otimes_Z Z^{-1}Z[b]\simeq M_n(D)\otimes_Z k[b]\simeq M_n(D)\otimes_k k[b]\simeq\\ \simeq M_n(\varepsilon\otimes_k k[u])\otimes_k P\simeq k[u]\otimes_k M_n(\varepsilon)\otimes_k P\end{multline*}
By (\cite{Herstein}, p.96) we have an isomorphism $M_n(\varepsilon)\otimes_k P\simeq M_{mn}(P)$. So, 
\begin{multline*} k[u]\otimes_k M_n(\varepsilon)\otimes_k P\simeq M_{mn}(P)\otimes_k k[u]\simeq\\ \simeq M_{mn}(P)\otimes_P k[u]\simeq M_{mn}(P)\otimes_P P[u]\simeq M_{mn}(P\otimes_P P[u])\simeq M_{mn}(P[u]).\end{multline*}
It means that $B\otimes_Z Z[b]$ satisfies the conditions of case 1 and hence $B\otimes_Z Z[b]$ embeds into a free finitely generated $Z[b]$-module. Since $Z[b]$ is finitely generated over $Z$, $B\otimes_Z Z[b]$ embeds into a finitely generated $Z$-module. It remains to notice that $B\subset B\otimes_Z Z[b]$. 

3) Case $D=\varepsilon$, $D_0=C_\varepsilon(u)$, $k[u]\subset\varepsilon$. Let us prove that $Z(\varepsilon)=k$.  It is enough to show that  $Z(\varepsilon)_1=0$. If $0\neq x\in Z(\varepsilon)_1$, then $ux=xu$, that implies $x\in D_0$, a contradiction. Now we can use the Formanek's Theorem for non-graded algebras.

4) Case $D=M_2(\varepsilon)=\varepsilon\otimes_k M_2(k)$, $D_0=\varepsilon\otimes_k k[u]$, $D_1=\varepsilon\otimes k[u]w$, where $w\in M_2(k)$, and $k[u]$ is not contained in $\varepsilon$.

Let us consider the case $\varepsilon=k$. In this case $D=M_2(k)$, hence $Z(A)_1=0$. The algebra $M_n(M_2(k))$ is simple, so we can use the Formanek's Theorem.

Now, let $\varepsilon$ be an arbitrary division algebra. It is obvious that $Z(M_n(M_2(\varepsilon)))\simeq Z(\varepsilon)\simeq k$ like in case 3. Hence, we can use the Formanek's Theorem. 
\end{proof}

\section{Central orders in simple finite dimensional alternative superalgebras}

%One can check that a $Z_2$-graded algebra $A=A_0+A_1$ is an alternative superalgebra if and only if it satisfies the following identity for all $x,y,z\in A_0\cup A_1$, $a\in A_0$:
%$$(x,y,z)+(-1)^{|y||z|}(x,z,y)=0,$$
%$$(x,y,z)+(-1)^{|x||y|}(y,x,z)=0,$$
%$$(a,a,x)=0.$$
%Here $|t|=i$, if $t\in A_i$.
%\medskip

In \cite{ZelmShest} E.I.~Zel'manov and I.P.~Shestakov proved that all simple alternative superalgebras are either associative or trivial, if a field characteristic is not 2 or 3. Let us state examples of simple alternative superalgebras over fields with a characteristic equal to 2 or 3.

\medskip

\noindent\textbf{Example 2.} a) $\mathrm{Char}(k)=2$. Let $\mathbf{O}=\mathbf{H}+v\mathbf{H}$ be the Calyley--Dickson algebra with a natural $Z_2$-grading, $\mathbf{H}$ are quaternions over a field $k$. Then $\mathbf{O}=\mathbf{H}+v\mathbf{H}$ is a simple alternative superalgebra.

b) $\mathrm{Char}(k)=2$. We need to apply the Cayley--Dickson process to algebra $\mathbf{O}$. We have $k[u]=k+ku$, $u^2=\alpha\neq 0\in k$. Then $\mathbf{O}[u]=k[u]\otimes_k\mathbf{O}=\mathbf{O}+\mathbf{O}u$ is a simple alternative superalgebra.

c) $\mathrm{Char}(k)=3$. Let $A=k1$ be one-dimensional space and let $M=kx+ky$ be a two-dimensional space over a field $k$. We will use a notation $\mathbf{B(1,2)}=A+M$ for the commutative superalgebra over $k$, where $1$ is a unit in $\mathbf{B(1,2)}$ and $xy=-yx=1$. Then $\mathbf{B(1,2)}$ is a simple alternative superalgebra.

d) $\mathrm{Char}(k)=3$. Let $A=M_2(k)$ be a $2\times 2$ matrix algebra over a field $k$, $M=km_1+km_2$ be a two-dimensional space over $k$. We will use a notation $\mathbf{B(4,2)}=A+M$ for $Z_2$-graded algebra over $k$, where $e_{ij}m_k=\delta_{ij}m_j$, $m_1m_2=e_{11}$, $m_2m_1=-e_{22}$, $m_1^2=-e_{21}$, $m_2^2=e_{12}$ and $ma=\overline{a}m$ for $a\in A$. Here $\overline{a}$ is the symplectic involution. Then $\mathbf{B(4,2)}$ is a simple alternative superalgebra.

e) $\mathrm{Char}(k)=3$. Let $\Gamma$ be an associative and commutative $\partial$-simple algebra, $\partial$ is a derivation on $\Gamma,$ $\gamma\in\Gamma$ and $\overline{\Gamma}$ is an isomorpic copy of linear space $\Gamma.$ Consider $B(\Gamma, \partial, \gamma)=\Gamma+\overline{\Gamma}$ is a direct sum of linear spaces with multiplication below:
$$x\cdot y = xy,$$
$$x\cdot\overline{y}=\overline{x}\cdot y = \overline{xy},$$
$$\overline{x}\cdot\overline{y}=(\gamma xy + 2\partial(x)y + x\partial(y)),$$
where $x,y\in\Gamma$, $xy$ is the product in $\Gamma.$ Then $B(\Gamma,\partial,\gamma)$ is a simple alternative superalgebra.

In paper \cite{Shest} I.P.~Shestakov classified all simple alternative superalgebras over a field of characteristic 2 or 3.

\medskip 

\noindent\textbf{Theorem} (I.P. Shestakov, \cite{Shest}). {\it Let $B=A+M$ be a simple alternative non-associative central superalgebra over a field $k$. If $M\neq 0$, then $B$ is isomorphic to one of five algebras from Example 2.}

Now, using this classification, we prove the following

\begin{theorem}
	 Let $B$ be a unital alternative non-associative superalgebra and $Z=Z(B)_{0}$. If $Z^{-1}B$ is a finite dimensional central simple superalgebra, then either $B$ embeds (as a $Z$-submodule) into a free finitely generated $Z$-module or $Z^{-1}B$ is isomorphic to $B(\Gamma,\partial,\gamma)$.
	 \end{theorem}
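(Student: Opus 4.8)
The plan is to run everything through the two classification theorems quoted above. Write $A=Z^{-1}B$ and $k=Z^{-1}Z$. Since $A$ is unital and simple, every nonzero $z\in Z$ is invertible in $A$; hence $B$ embeds into $A$, the algebra $B$ is prime (a central order in a simple algebra) and non-degenerate (if $aBa=0$ then $aAa=Z^{-1}(aBa)=0$, impossible since $A$ is simple finite dimensional), and $Z(B)=B\cap Z(A)$. As $B$ is non-associative, so is $A$. If $A_{1}=0$, then $A=A_{0}$ is a finite dimensional central simple non-associative alternative algebra, hence a Cayley--Dickson algebra; if $A_{1}\neq 0$, Shestakov's theorem gives $A\cong$ one of the five superalgebras of Example 2. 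Note that case (a), $A\cong\mathbf{O}$, is also a Cayley--Dickson algebra when we forget the grading. In the two situations where $A$ is, as an ordinary algebra, a Cayley--Dickson algebra, the ordinary center of $B$ equals $B\cap Z(A)=B\cap k1=Z(B)_{0}=Z$, so \cite{Panasenko} (whose module may be taken free, as $B$ is $Z$-torsion free) gives the conclusion. If $A\cong B(\Gamma,\partial,\gamma)$ we are in the excepted case and there is nothing to prove; so it remains to treat cases (b), (c), (d).

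Here I would split $B=B_{0}\oplus B_{1}$ as $Z$-modules and handle the two parts separately. In each of (b), (c), (d) the even part $B_{0}$ is a central order in $A_{0}=Z^{-1}B_{0}$, which is a finite dimensional central simple algebra ($\mathbf{O}$, $k1$, $M_{2}(k)$ respectively) whose center is $k1$; the computation above then gives $Z(B_{0})=Z$, so by \cite{Panasenko} (case (b)) or Formanek's theorem \cite{Formanek} (cases (c), (d)) there is a $Z$-linear embedding $B_{0}\hookrightarrow Z^{r_{0}}$, $r_{0}=\dim_{k}A_{0}$.

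For the odd part I would use that $B_{1}\cdot B_{1}\subseteq B_{0}$, that $B_{1}$ spans $A_{1}$ over $k$ (since $A_{1}=Z^{-1}B_{1}$), and the explicit form of the multiplication $A_{1}\times A_{1}\to A_{0}$ taken from Example 2. In case (b), $A_{1}=A_{0}u$ with $u$ a central odd element, $u^{2}=\alpha\in k^{*}$, and there is $0\neq z'\in Z$ with $z'u\in B_{1}$; then $b_{1}\mapsto b_{1}\cdot(z'u)$ is a $Z$-linear map $B_{1}\to B_{0}$, and it is injective because writing $b_{1}=xu$ gives $b_{1}\cdot(z'u)=\alpha z'x$. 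In cases (c) and (d) I would choose $b_{1}',\dots,b_{t}'\in B_{1}$ spanning $A_{1}$ over $k$ and use the $Z$-linear map $b_{1}\mapsto (b_{1}b_{1}',\dots,b_{1}b_{t}')\in B_{0}^{\,t}$; in (d) a single nonzero $b_{1}'$ already suffices, since a direct check from $m_{1}m_{2}=e_{11}$, $m_{2}m_{1}=-e_{22}$, $m_{1}^{2}=-e_{21}$, $m_{2}^{2}=e_{12}$ shows the multiplication $A_{1}\times A_{1}\to A_{0}$ has no one-sided zero divisors, and in (c) the multiplication is, after identifying $A_{0}=k1$, a nondegenerate alternating form on the two-dimensional $A_{1}$, so two spanning elements make the map injective. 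Composing with $B_{0}\hookrightarrow Z^{r_{0}}$ we get a $Z$-linear embedding of $B_{1}$ into a free finitely generated $Z$-module, hence one of $B=B_{0}\oplus B_{1}$, as required.

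The substantive inputs are thus Shestakov's classification together with the module-finiteness theorems of Formanek and of \cite{Panasenko} applied to $B_{0}$; beyond that, the only real work is the case-by-case bookkeeping — in particular checking $Z(A_{0})=k1$ and verifying the non-degeneracy of the odd multiplication — with case (d), because of the $M_{2}(k)$-coordinates, being the most delicate. Case (e) must be set aside because there $A_{0}=\Gamma$ is only differentially simple, not simple, so the module-finiteness theorems do not apply to $B_{0}$; this is why it appears as an exception in the statement.
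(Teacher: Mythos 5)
Your argument is correct in outline but follows a genuinely different route from the paper. The paper fixes an explicit basis of each of the five Shestakov superalgebras (and of the Cayley--Dickson algebra in the ungraded case), writes a general element $a$ with unknown coordinates, and exhibits, case by case, polynomial expressions in $a$ and the numerators $f_i$ of the basis elements that recover each coordinate multiplied by a fixed nonzero $z_0\in Z$; this gives $Bz_0\subseteq\sum Ze_i$ directly. You instead reduce everything to the ungraded theorems: in the cases $M=0$ and $\overline{B}\cong\mathbf{O}$ you treat the whole algebra as an ungraded central order in a Cayley--Dickson algebra, and in the remaining cases (b), (c), (d) you embed $B_0$ by Formanek/\cite{Panasenko} and then embed $B_1$ into a power of $B_0$ using the non-degeneracy of the odd--odd multiplication $A_1\times A_1\to A_0$. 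Your treatment of the odd part checks out against the multiplication tables (in $\mathbf{B(4,2)}$ the product of two nonzero odd elements is nonzero; in $\mathbf{B(1,2)}$ the odd part carries a non-degenerate alternating form; in $\mathbf{O}[u]$ right multiplication by $z'u$ is injective), and it buys a much shorter argument with no coordinate computations for the odd part. The identifications $Z(B)=B\cap Z(A)$ and $Z(B_0)=B_0\cap Z(A_0)=Z$, and the passage from ``finitely generated'' to ``free finitely generated'' (clear denominators of generators of a torsion-free module over the domain $Z$), are also fine, as is setting aside $B(\Gamma,\partial,\gamma)$ because $\Gamma$ is only $\partial$-simple.

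The one point you should not take for granted is the appeal to \cite{Panasenko} for central orders in a Cayley--Dickson algebra over a field of characteristic $2$, which you need both for case (a) and for the even part of case (b). The paper does not simply cite \cite{Panasenko} there: it writes out the characteristic-$2$ multiplication table of $\mathbf{O}$ and produces the recovery identities by hand (its cases 1 and 5), and even in the ungraded case $M=0$ it invokes ``the proof in \cite{Panasenko} \emph{and} case 1''. This strongly suggests that the argument of \cite{Panasenko} is tied to a characteristic $\neq 2$ presentation of the octonions and does not directly cover characteristic $2$ --- which is exactly the characteristic in which Shestakov's cases (a) and (b) live. If that is so, you must supply the characteristic-$2$ computation yourself (in effect, reproduce the paper's case 1); with that supplement the rest of your argument goes through unchanged.
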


\medskip

\begin{proof}
	 Let $B=A+M$ be a unital alternative non-associative superalgebra and $Z=Z(B)_{0}$. We may define  $k=Z^{-1}Z,$ $\overline{B}=Z^{-1}B,$ $\overline{A}=\overline{B}_{0},$ $\overline{M}=\overline{B}_{1}$. 
%We want to prove that there is an element $z_{0}\in Z,$ such that $Bz_{0}$ embeds into some free finite $Z(B)$-module $D$. 
By Shestakov's theorem we have to consider one of the following cases:

\medskip

1. $char F = 2,$ $\overline{B}=\mathbf{O}=\mathbf{H}+v\mathbf{H}$ is the Cayley--Dickson algebra over $k$ with natural grading, generated by the Cayley--Dickson process, $\mathbf{H}$ is the quaternion algebra. There exist $\mu\in k,\beta,\gamma\in (k\setminus 0)$ such that the multiplication table of $\overline{B}$ in a certain basis $e_{0}, e_{1}, ..., e_{7}$, $e_0=1$, has the following form (see., for example, \cite{Zhevl}, \S2.2): 

\begin{table}[H]
	\begin{center}
		\begin{tabular}{|c|c|c|c|c|c|c|c|}
			\hline
			& $e_{1}$ & $e_{2}$ & $e_{3}$ & $e_{4}$ & $e_{5}$ & $e_{6}$ & $e_{7}$\\
			\hline
			$e_{1}$ & $e_{1}+\mu$ & $e_{2}+e_{3}$ & $\mu e_{2}$ & $e_{4}+e_{5}$ & $\mu e_{4}$ & $e_{7}$ & $e_{7}+\mu e_{6}$\\
			\hline
			$e_{2}$ & $e_{3}$ & $\beta$ & $\beta e_{1}$ & $e_{6}$ & $e_{7}$ & $\beta e_{4}$ & $\beta e_{5}$\\
			\hline
			$e_{3}$ & $e_{3}+\mu e_{2}$ & $\beta + \beta e_{1}$ & $\mu\beta$ & $e_{7}$ & $\alpha e_{6}+e_{7}$ & $\beta (e_{5}+e_{4})$ & $\mu\beta e_{4}$\\
			\hline
			$e_{4}$ & $e_{5}$ & $e_{6}$ & $e_{7}$ & $\gamma$ & $\gamma e_{1}$ & $\gamma e_{2}$ & $\gamma e_{3}$\\
			\hline
			$e_{5}$ & $e_{5}+\mu e_{4}$ & $e_{7}$ & $e_{7}+\mu e_{6}$ & $\gamma + \gamma e_{1}$ & $\mu\gamma$ & $\gamma (e_{3}+e_{2})$ & $\mu\gamma e_{2}$\\
			\hline
			$e_{6}$ & $e_{6}+e_{7}$ & $\beta e_{4}$ & $\beta (e_{5}+e_{4})$ & $\gamma e_{2}$ & $\gamma (e_{2}+e_{3})$ & $\beta\gamma$ & $\beta\gamma (e_{1}-1)$\\
			\hline
			$e_{7}$ & $\mu e_{6}$ & $\beta e_{5}$ & $\beta\gamma e_{4}$ & $\gamma e_{3}$ & $\mu\gamma e_{2}$ & $\beta\gamma e_{1}$ & $\mu\beta\gamma$\\
			\hline
		\end{tabular}
	\end{center}
\end{table} 

There exists $z\in Z$, $z\neq 0$, such that $e_{i}=\frac{f_{i}}{z},
\mu=\frac{\nu}{z}, \beta=\frac{\lambda}{z},
\gamma=\frac{\delta}{z},$ where  $f_{i}\in \overline{B}$, $\delta,\lambda,\in
Z\setminus\{0\}$, $\nu\in Z$. Let $a\in B$, $a=\sum\limits_{i=0}^{7}\alpha_{i}e_{i}$. We will use a notation $a\circ b=ab+ba$. We have two cases:

\medskip

1a. $\mu \neq 0.$ Then the following relations hold:
$$\nu z\alpha_{1}=(az-a\circ f_{1})(f_{1}-z),$$
$$z\nu\lambda\alpha_{2}=\nu(a\circ f_{3})z -(az-a\circ f_{1})(f_{1}-z)f_{3}, $$
$$z\nu\lambda\alpha_{3}=\nu(a\circ f_{2})z-(az-a\circ f_{1})(f_{1}-z)f_{2}, $$
$$z\nu\delta\alpha_{4}=\nu(a\circ f_{5})z-(az-a\circ f_{1})(f_{1}-z)f_{5}, $$
$$z\nu\delta\alpha_{5}=\nu(a\circ f_{4})z-(az-a\circ f_{1})(f_{1}-z)f_{4}, $$
$$\nu\lambda\delta\alpha_{6}=\nu(a\circ f_{7})z-(az-a\circ f_{1})(f_{1}-z)f_{7}, $$
$$\nu\lambda\delta\alpha_{7}=\nu(a\circ f_{6})z-(az-a\circ f_{1})(f_{1}-z)f_{6}. $$
So, if $z_{0}=\nu\lambda\delta z$ then $az_{0}\in D=\sum\limits_{i=0}^{7} Z\cdot e_{i}.$ As $Bz_{0}\simeq B$, the superalgebra $B$ embeds into the free finitely generated $Z$-module $D$.

\medskip

1b. $\mu = 0.$ Then
$$\lambda\delta^{2}z^{2}\alpha_{0}=((((f_{4}-f_{5})(az-a\circ f_{1}))f_{4})\circ f_{6})f_{6}. $$
It means that $\lambda\delta^{2}z^{2}\alpha_{0}\in Z.$ Therefore,
$$\lambda^{2}\delta^{4}\alpha_{1}z=((\lambda\delta^{2} az^{2} - \lambda\delta^{2} za\circ f_{1} - \lambda\delta^{2}\alpha_{0}z^{2})\circ f_{6})f_{6} ,$$
hence $\lambda^{2}\delta^{4}\alpha_{1}z\in Z$.
$$\lambda^{3}\delta^{4}\alpha_{2}z=\lambda^{2}\delta^{4}z a\circ f_{3} - \lambda^{2}\delta^{4}z\alpha_{1}f_{3} $$
$$\lambda^{3}\delta^{4} z\alpha_{3}=\lambda^{2}\delta^{2}z a\circ f_{2} - \lambda^{2}\delta^{4}z\alpha_{1}f_{2} $$
$$\lambda^{2}\delta^{5}\alpha_{4}z=\lambda^{2}\delta^{4}z a\circ f_{5} - \lambda^{2}\delta^{4}z\alpha_{1}f_{5} $$
$$\lambda^{2}\delta^{5}\alpha_{5}z=\lambda^{2}\delta^{4}z a\circ f_{4} - \lambda^{2}\delta^{4}z\alpha_{1}f_{4} $$
$$\lambda^{3}\delta^{5}\alpha_{6}=\lambda^{2}\delta^{4}z a\circ f_{7} - \lambda^{2}\delta^{4}z\alpha_{1}f_{7} $$
$$\lambda^{3}\delta^{5}\alpha_{7}=\lambda^{2}\delta^{4}z a\circ f_{6} - \lambda^{2}\delta^{4}z\alpha_{1}f_{6} $$
As $\lambda^{2}\delta^{4}z\alpha_{1}\in Z,$ we have $\lambda^{3}\delta^{5}\alpha_{i}z^{2}\in Z.$ So if $z_{0}=\lambda^{3}\delta^{5}z^{2}$ then $Bz_{0}$ embeds into $D= \sum\limits_{i=0}^{7} Z\cdot e_{i}.$ As $Bz_{0}\simeq B$, the superalgebra $B$ embeds into the free finitely generated $Z$-module $D$.

\medskip

2. $M=0$. Then $B=A$ is a prime nondegenerate alternative algebra, so it is a central order in the Cayley--Dickson algebra by \cite{Slater}. Using the proof in \cite{Panasenko} and case 1 we have that $B$ embeds into a free finitely generated $Z$-module.

\medskip

3. $char F = 3,$ $\overline{A}=k\cdot 1,$ $\overline{M}=k\cdot x + k\cdot y,$ $1$ is the unit, $x^{2}=y^{2}=0$ and $xy=-yx=1.$ There exists $z\in Z$, $z\neq 0$, such that $x=\frac{x_{0}}{z},$ $y=\frac{y_{0}}{z}.$ Let $a\in B.$ Then $a=\alpha x + \beta y + \gamma\cdot 1$ and
$$((ay_0)y_0)x_0 = -\alpha z^3,$$
$$((ax_0)x_0)y_0 = -\beta z^3,$$
$$(((ay_0)x_0)x_0)y_0=-\gamma z^4.$$
So, if $z_{0}=z^{4}$ then $az_{0}\in D=Z\cdot x + Z\cdot y + Z\cdot 1.$ As $Bz_{0}\simeq B$, hence $B$ embeds into the free finitely generated $Z$-module $D$.

\medskip

4. $char F = 3,$ $\overline{A}=M_{2}(k)$ is the matrix algebra over $k,$ $\overline{M}=k\cdot m_{1} + k\cdot m_{2},$ 
$$m_{1}^{2}=-e_{21}, m_{2}^{2}=e_{12}, m_1m_2=e_{11}, m_2m_1=-e_{22},$$
$$e_{i j}\cdot m_{k}=\delta_{ik}m_{j},$$  
$$m\cdot a = \overline{a}m, \text{ if } a\in A,$$
where $\overline{a}$ is the symplectic involution on $M_{2}(k).$ There exists $z\in Z$, $z\neq 0$, such that $e_{ij}=\frac{f_{ij}}{z},$ $m_{i}=\frac{n_{i}}{z}.$ 
Let $a\in B.$ Then $a=\sum\limits_{i,j}\alpha_{i j}e_{ij} + \beta_{1}m_{1} + \beta_{2}m_{2}$ and the following relations hold:
$$(f_{12}(((an_{1})n_{1})f_{12}))f_{21} + z^{2}((an_{1})n_{1})f_{12} = -\alpha_{1 1}z^{5},$$
$$f_{12}(((an_{1})n_{1})f_{21}) + (((an_{1})n_{1})f_{21})f_{12} = -\alpha_{1 2}z^{4},$$
$$f_{21}(((an_{2})n_{2})f_{12}) + (((an_{2})n_{2})f_{12})f_{21} = \alpha_{2 1}z^{4},$$
$$(f_{21}(((an_{2})n_{2})f_{21}))f_{12} + z^{2}((an_{2})n_{2})f_{21} = \alpha_{2 2}z^{5},$$
$$((f_{22}(f_{12}a))n_{2})f_{21}+f_{21}((f_{22}(f_{12}a))n_{2})=\beta_{1}z^{4},$$
$$((f_{11}(f_{21}a))n_{1})f_{12}+f_{12}((f_{11}(f_{21}a))n_{1})=-\beta_{2}z^{4}.$$
So, if $z_{0}=z^{5}$ then $az_{0}\in D=\sum\limits_{i,j} Z\cdot e_{ij} + Z\cdot m_{1} + Z\cdot m_{2}.$ As $Bz_{0}\simeq B$, hence $B$ embeds into the free finitely generated $Z$-module $D$.

\medskip

5. $char F = 2,$ $k[u]= k + ku,$ $u^{2}=\alpha\neq 0\in k$ is a two-dimensional superalgebra and $\overline{B}=k[u]\otimes_{k}\mathbb{O}=\mathbb{O}+\mathbb{O}u.$ Consider the same basis of $\mathbb{O}$ as in case 1.

There exists $z\in Z$, $z\neq 0$, such that $e_{i}=\frac{f_{i}}{z},
\mu=\frac{\nu}{z}, \beta=\frac{\lambda}{z},
\gamma=\frac{\delta}{z}, \alpha=\frac{\varepsilon}{z}, u=\frac{v}{z},$ where  $f_{i}\in B$, $\delta,\lambda,\alpha\in
Z\setminus\{0\}$, $\nu\in Z,$ $v\in Z(B)\cap M$. Let $a\in B.$ Then $a=\sum\limits_{i=0}^{7}\alpha_{i}e_{i} + \sum\limits_{i=0}^{7}\beta_{i}e_{i}u.$ We have two cases:

5a. $\mu \neq 0.$ Then
$$\nu z(\alpha_{1}+\beta_{1}u)=(az-a\circ f_{1})(f_{1}-z),$$
$$z\nu\lambda(\alpha_{2}+\beta_{2}u)=\nu(a\circ f_{3})z -(az-a\circ f_{1})(f_{1}-z)f_{3}, $$
$$z\nu\lambda(\alpha_{3}+\beta_{3}u)=\nu(a\circ f_{2})z-(az-a\circ f_{1})(f_{1}-z)f_{2}, $$
$$z\nu\delta(\alpha_{4}+\beta_{4}u)=\nu(a\circ f_{5})z-(az-a\circ f_{1})(f_{1}-z)f_{5}, $$
$$z\nu\delta(\alpha_{5}+\beta_{5}u)=\nu(a\circ f_{4})z-(az-a\circ f_{1})(f_{1}-z)f_{4}, $$
$$\nu\lambda\delta(\alpha_{6}+\beta_{6}u)=\nu(a\circ f_{7})z-(az-a\circ f_{1})(f_{1}-z)f_{7}, $$
$$\nu\lambda\delta(\alpha_{7}+\beta_{7}u)=\nu(a\circ f_{6})z-(az-a\circ f_{1})(f_{1}-z)f_{6}.$$
So, if $z_{0}=\nu\lambda\delta z$ then $az_{0}\in D_{1}=\sum\limits_{i=0}^{7} (B\cap(k[u]))\cdot e_{i}.$ %\subset D =\sum\limits_{i=0}^{7} Z(B)\cdot e_{i} .$

\medskip

5b. $\mu = 0.$ 
Then
$$\lambda\delta^{2}z^{2}(\alpha_{0}+\beta_{0}u)=((((f_{4}-f_{5})(az-a\circ f_{1}))f_{4})\circ f_{6})f_{6}. $$
It means that $\lambda\delta^{2}z^{2}(\alpha_{0}+\beta_{0}u)\in (B\cap k[u]).$ We also have
$$\lambda^{2}\delta^{4}(\alpha_{1}+\beta_{1}u)z=((\lambda\delta^{2} az^{2} - \lambda\delta^{2} za\circ f_{1} - \lambda\delta^{2}(\alpha_{0}+\beta_{0}u) z^{2})\circ f_{6})f_{6} ,$$
hence $\lambda^{2}\delta^{4}(\alpha_{1}+\beta_{1}u)z\in Z$.
As above, the following relations hold:
$$\lambda^{3}\delta^{4}\alpha_{2}z=\lambda^{2}\delta^{4}z a\circ f_{3} - \lambda^{2}\delta^{4}z(\alpha_{1}+\beta_{1}u)f_{3},$$
$$\lambda^{3}\delta^{4} z\alpha_{3}=\lambda^{2}\delta^{2}z a\circ f_{2} - \lambda^{2}\delta^{4}z(\alpha_{1}+\beta_{1}u)f_{2}, $$
$$\lambda^{2}\delta^{5}\alpha_{4}z=\lambda^{2}\delta^{4}z a\circ f_{5} - \lambda^{2}\delta^{4}z(\alpha_{1}+\beta_{1}u)f_{5}, $$
$$\lambda^{2}\delta^{5}\alpha_{5}z=\lambda^{2}\delta^{4}z a\circ f_{4} - \lambda^{2}\delta^{4}z(\alpha_{1}+\beta_{1}u)f_{4}, $$
$$\lambda^{3}\delta^{5}\alpha_{6}=\lambda^{2}\delta^{4}z a\circ f_{7} - \lambda^{2}\delta^{4}z(\alpha_{1}+\beta_{1}u)f_{7}, $$
$$\lambda^{3}\delta^{5}\alpha_{7}=\lambda^{2}\delta^{4}z a\circ f_{6} - \lambda^{2}\delta^{4}z(\alpha_{1}+\beta_{1}u)f_{6}. $$
As $\lambda^{2}\delta^{4}z(\alpha_{1}+\beta_{1}u)\in Z,$ then $\lambda^{3}\delta^{5}(\alpha_{i}+\beta_{i}u)z^{2}\in Z.$ So if $z_{0}=\lambda^{3}\delta^{5}z^{2}$ then $Bz_{0}$ embeds into $D_{1}= \sum\limits_{i=0}^{7} (B\cap k[u])\cdot e_{i}.$

In both cases we have $Bz_{0}\subset \sum\limits_{i=0}^{7} (B\cap k[u])e_i.$ If $a\in A$ then  $az_{0}\in \sum\limits_{i=0}^7(B\cap \nolinebreak[4]k)e_i=\sum\limits_{i=0}^7 Ze_i$ and $Az_{0}\subset \sum\limits_{i=0}^{7} Z\cdot e_{i}.$ If $a\in M$ then $az_{0}v\in\sum\limits_{i=0}^7(B\cap k)e_i = \sum\limits_{i=0}^7 Ze_i$ and $Mz_{0}v\in\sum\limits_{i=0}^{7} Z\cdot e_{i}.$ So, $Az_{0}$ and $Mz_{0}v$ are finite $Z$-modules, $Az_{0}\simeq A$ and $Mz_{0}v\simeq M$, hence $B=A+M$ is a finite $Z$-module. Theorem is proved.

\end{proof}

In \cite{Shest}, I.P. Shestakov also classified all prime alternative superalgebras under some restriction.

\begin{definition}
	Let $A$ be a $Z_2$-graded algebra. It is called \textbf{prime}, if for every two nonzero ideals $I$ and $J$ the statement $IJ=0$ implies $I=0$ or $J=0$.
\end{definition}

\noindent\textbf{Theorem} (I.P. Shestakov, \cite{Shest}). {\it Let $B=A+M$ be a prime alternative nonassociative superalgebra over a field $k$, $M\neq 0$. Let one of the following two statements hold:

1) there exist $a,b\in A$ such that $(ab-ba)^4\neq 0$;

2) $A$ does not contain nonzero nil-ideals $I$ satisfying  condition $(I,M,M)\subset I$.

\noindent Then the even part $Z=Z(B)_0$ of a center of superalgebra $B$ is not zero, $Z$ does not have zero divisors of $B$, and $Z^{-1}B$ is a central simple alternative superalgebra over the field $Z^{-1}Z$.}

So we trivially have the following corollary. 

\begin{corollary}
 Let $B$ be a unital prime alternative nonassociative superalgebra with a restriction 1) or 2) from Shestakov's Theorem and $Z=Z(B)_{0}$. Then either $B$ is an order in $B(\Gamma, d, \gamma)$ or $B$ embeds into a finitely generated $Z$-module. 
\end{corollary}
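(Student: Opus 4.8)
The plan is to assemble the corollary from Shestakov's two theorems quoted above together with Theorem~2. First I would apply Shestakov's prime theorem to $B$: since $B=A+M$ is a unital prime alternative non-associative superalgebra with $M\neq 0$ satisfying restriction 1) or 2), we obtain that $Z=Z(B)_{0}\neq 0$, that $Z$ contains no zero divisors of $B$, and that $\overline{B}:=Z^{-1}B$ is a central simple alternative superalgebra over the field $k=Z^{-1}Z$. In particular, by the very construction of the central closure, $B$ is an order in $\overline{B}$.

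Next I would identify $\overline{B}$. It is non-associative, because it contains the non-associative subalgebra $B$, and its odd part $\overline{M}=Z^{-1}M$ is nonzero, because it contains $M\neq 0$. Hence Shestakov's classification theorem applies to $\overline{B}$, and $\overline{B}$ is isomorphic to one of the five superalgebras listed in Example~2: $\mathbf{O}$, $\mathbf{O}[u]$, $\mathbf{B(1,2)}$, $\mathbf{B(4,2)}$ or $B(\Gamma,\partial,\gamma)$.

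Then I would split according to this list. If $\overline{B}\simeq B(\Gamma,\partial,\gamma)$ for some $\Gamma$, $\partial$, $\gamma$, then, since $B$ is an order in $\overline{B}$, the superalgebra $B$ is an order in $B(\Gamma,\partial,\gamma)$ and we are in the first alternative of the statement. In each of the four remaining cases $\overline{B}$ is finite dimensional over $k$ (of dimension $8$, $16$, $3$, $6$, respectively), so $\overline{B}=Z^{-1}B$ is a finite dimensional central simple alternative superalgebra and Theorem~2 applies: either $B$ embeds as a $Z$-submodule into a free finitely generated $Z$-module, and hence into a finitely generated $Z$-module, or $\overline{B}\simeq B(\Gamma,\partial,\gamma)$, in which case once more $B$ is an order in $B(\Gamma,\partial,\gamma)$. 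In every case one of the two alternatives of the corollary holds.

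Since all the needed ingredients are already at hand, no serious obstacle arises; this is precisely why the statement is phrased as a corollary. The only point that needs a little attention is that Theorem~2 is stated for a \emph{finite dimensional} $Z^{-1}B$, so before invoking it one must first peel off the case $\overline{B}\simeq B(\Gamma,\partial,\gamma)$, which is the only member of Shestakov's list that may be infinite dimensional; the other four families are finite dimensional of the fixed dimensions indicated above. (If one also wishes to allow $M=0$ in the hypotheses, that degenerate situation reduces to the algebra case already treated in \cite{Panasenko} and in case~2 of the proof of Theorem~2.)
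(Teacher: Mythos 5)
Your proposal is correct and is exactly the argument the paper has in mind: the paper gives no explicit proof, declaring the corollary to follow "trivially" from Shestakov's prime theorem (which produces the central closure $Z^{-1}B$ as a central simple alternative superalgebra), Shestakov's classification, and Theorem~2. Your extra remark that the $B(\Gamma,\partial,\gamma)$ case must be separated first because it is the only possibly infinite dimensional member of the list is a sensible point of care, fully consistent with how Theorem~2 is stated.
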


\section{Central orders in classical simple Jordan superalgebras with a semi-simple even part.}

In this section all fields are assumed to be of characteristic not equal 2. 

%One can check that a commutative superalgebra $J=A+M$ is a Jordan superalgebra if and only if $A$ is a Jordan algebra and $J$ satisfies the following identity for all $x,y,z\in A_0\cup A_1$, $a\in A_0$:
%\begin{multline*}
%((ax)y)z+(-1)^{|x||y|+|y||z|+|x||z|}((az)y)x+(-1)^{|y|}a((xz)y)=\\=(a(xy))z+(-1)^{|y||z|}(a(xz))y+(-1)^{|x||y|+|x||z|}(a(yz))x.\end{multline*}
If $A=A_0 + A_1$ is a $Z_2$-graded algebra, then we may define an algebra $A^{+}$ with the same linear structure, but with a new multiplication: $$x\circ_s y=\frac{1}{2}(xy+(-1)^{|x||y|}yx)$$ for every $x,y\in A_0\cup A_1$.  If $A$ is an associative superalgebra then $A^{+}$ is a Jordan superalgebra. 

Let $A$ be an associative superalgebra with a superinvolution $x\rightarrow x^*$. Then the algebra of symmetric elements relative to this superinvolution is a subsuperalgebra of $A^+$ and denotes by $H(A,*)$. Let the notation $H_n(A)$ stand for the subsuperalgebra of symmetric elements of $M_n(A)$ with a natural grading relative to the $*$-transpose involution.

In \cite{RacineZelm} M.L. Racine and E.I. Zel'manov classified all finite dimensional simple Jordan superalgebras with a semi-simple even part. Let us give examples of simple Jordan superalgebras.

\noindent\textbf{Example 3.} a) If $B=M_{n+m}(k)$ then $B^{+}$ is denoted by $M_{n,m}(k)$. 

b) We will use the following notation: $$Q_n(k)=\left\{\begin{pmatrix}
a & b \\
b & a 
\end{pmatrix}\mid a,b\in M_n(k)\right\}^{+}.$$
Then $Q_n(k)$ is a simple Jordan superalgebra. It is easy to see that $Q_n(k)=M_n(k)+\overline{M_n(k)}=A+M$, where $A=M_n(k)$ acts on $M=\overline{M_n(k)}$ like the Jordan matrix multiplication on $M_n(k)$ and for $\overline{x},\overline{y}\in\overline{M_n(k)}$ we have $\overline{x}\overline{y}=\frac{1}{2}(x\cdot y-y\cdot x)$, where $x\cdot y$ is the multiplication in the associative algebra $M_n(k)$.

c) We will use the following notation: $$P_n(k)=\left\{\begin{pmatrix}
a & b \\
c & d
\end{pmatrix}\in M_{n+n}(k)\mid a^T=d, b^T=-b, c^T=c\right\}.$$
Then $P_n(k)$ is a simple Jordan superalgebra, if $n>1$. Let the notation $S_n(k)$ stand for the subspace of skewsymmetric elements of $M_n(k)$. It is easy to see that $P_n(k)=M_n(k)+(\overline{S_n(k)}+\overline{H_n(k)})=A+M$, where $A=M_n(k)$ acts on $M=\overline{S_n(k)}+\overline{H_n(k)}$ in the following way: 
$$a\overline{s}=\frac{1}{2}(as+sa^T), \quad a\in M_n(k), s\in S_n(k),$$
$$a\overline{h}=\frac{1}{2}(a^Th+ha), \quad a\in M_n(k), h\in H_n(k).$$
For $\overline{x}_1,\overline{x}_2\in\overline{S_n(k)}$, $\overline{y}_1,\overline{y}_2\in \overline{H_n(k)}$ we have $$\overline{x}_1\overline{x}_2=\overline{y}_1\overline{y}_2=0,$$ $$\overline{x}_1\overline{y}_1=\frac{1}{2}(x_1\cdot y_1-y_1\cdot x_1),$$ where $x\cdot y$ is the multiplication in $M_n(k)$.

d) We will use the following notation:
$$osp_{n,2m}(k)=\left\{\begin{pmatrix}
a & b_1 & b_2 \\
-b_2^T & c_1 & c_2 \\
b_1^T & c_3 & c_1^T 
\end{pmatrix}\mid a=a^T, c_2=-c_2^T, c_3=-c_3^T\right\},$$
where $a\in M_n(k)$, $b_i\in M_{n\times m}(k)$ (matrices with $n$ lines and $m$ rows), $d_i\in M_m(k)$.

Then $osp_{n,2m}(k)$ is a simple Jordan superalgebra with $Z_2$-grading $$\begin{pmatrix}
a & b_1 & b_2 \\
-b_2^T & c_1 & c_2 \\
b_1^T & c_3 & c_1^T 
\end{pmatrix}= \begin{pmatrix}
a & 0 & 0 \\
0 & c_1 & c_2 \\
0 & c_3 & c_1^T 
\end{pmatrix}+ \begin{pmatrix}
0 & b_1 & b_2 \\
-b_2^T & 0 & 0 \\
b_1^T & 0 & 0 
\end{pmatrix},$$
i.e. $osp_{n,2m}(k)$ is a subsuperalgebra of $M_{n,2m}(k)$.

e) Let $A=ke_1+ke_2$, $M=kx+ky$, where $e_1,e_2$ are orthogonal idempotents, $e_ix=\frac{1}{2}x$, $e_iy=\frac{1}{2}y$, $xy=e_1+te_2$, where $0\neq t\in k$. Then $D_t=A+M$ is a Jordan simple superalgebra. 

f) Let $V=V_0\oplus V_1$ be a $Z_2$-graded vector space over $k$ and let $(\cdot,\cdot)$ be a non-degenerate superform on $V$ such that its restriction on $V_0\times V_0$ is symmetric, but its restriction on $V_1\times V_1$ is skew-symmetric. Let $J=A+M$, where $A=V_0+ke$, $M=V_1$, where $e$ is a unit of $J$ and $xy=(x,y)e$ for $x,y\in V$. Then $J$ is a simple Jordan superalgebra.

g) Let $A=H_3(k)$. Then there is a Jordan action of $H_3(k)$ on $S_3(k)$. We will use notations $\overline{S_3(k)}$ and $\overline{\overline{S_3(k)}}$ for two isomorphic copies of $H_3(k)$-modules $S_3(k)$, so $M=\overline{S_3(k)}\oplus\overline{\overline{S_3(k)}}$. Let us define a multiplication on $J=A+M$. If $\overline{x}_1,\overline{x}_2\in \overline{S_3(k)}$ and $\overline{\overline{y}}_1,\overline{\overline{y}}_2\in \overline{\overline{S_3(k)}}$ then $$\overline{x}_1\overline{x}_2=\overline{\overline{y}}_1\overline{\overline{y}}_2=0,$$ $$\overline{x}_1\overline{\overline{y}}_1=x_1\cdot y_1,$$
where $x_1\cdot y_1$ is the multiplication in the Jordan algebra $M_3(k)^+$. Then $J=H_3(k)+(\overline{S_3(k)}\oplus \overline{\overline{S_3(k)}})$ is a simple Jordan superalgebra.

h) Let $B(4,2)=A+M$ from Example 2.d. There is an involution $*$ on $B(4,2)$, $(a+m)^*=\overline{a}-m$, where $a\in A$, $m\in M$ and $\overline{a}$ is the symplectic involution on $A=M_2(k)$. Then define a superinvolution on $M_3(B(4,2))$, which is the ${}^*$-transpose superinvolution. The set of symmetric elements with respect to this superinvolution is denoted by $H_3(B(4,2))$, it is a simple Jordan superalgebra. 

\begin{definition}
	We will call algebras from Example 3 as \textbf{classical simple Jordan superalgebras}.
\end{definition}

\noindent\textbf{Theorem (M.L. Racine, E.I. Zel'manov, \cite{RacineZelm})}.
\textit{Every unital simple finite dimensional Jordan superalgebra with a semisimple even part over an algebraically closed field is isomorphic to one of the classical simple Jordan superalgebras.}

\begin{theorem}
	Let $J=A+M$ be a classical simple Jordan superalgebra and $Z=Z(J)_0$. Then a unital central order in $J$ embeds into a free finitely generated $Z$-module. 
\end{theorem}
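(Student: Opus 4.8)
Throughout, write $B=B_0+B_1$ for the given unital central order in $J$, so that $Z=Z(B)_0$ has no zero divisors of $B$, $\overline B=Z^{-1}B\cong J$, and set $k=Z^{-1}Z$, $\overline A=\overline B_0$, $\overline M=\overline B_1$. Since $J$ is a central simple superalgebra, $Z(J)_0=k\cdot1$; localization carries $Z$ into the center of $\overline B$, so $Z\subseteq k\cdot1$ and $B\cap(k\cdot1)=Z$. This last equality is what converts ``lies in $B$ and is a scalar'' into ``lies in $Z$'', and it is the only place where the centrality of $J$ enters. The plan is to go through the eight families a)--h) of Example~3; in each one we fix a $k$-basis $\{e_i\}$ of $J$ adapted to the structure (matrix units for the matrix-type superalgebras; a frame of orthogonal idempotents together with the corresponding weight vectors for the $H_3$-type ones; an orthogonal basis for the superform; the displayed basis for $D_t$), choose $0\neq z\in Z$ with $f_i:=ze_i\in B$ and with $z$ times every structure constant of $J$ in this basis lying in $Z$, and then exhibit a fixed $0\neq z_0\in Z$ with $z_0a\in D:=\sum_iZe_i$ for all $a\in B$. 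Since $z_0$ is not a zero divisor, $b\mapsto z_0b$ then embeds $B$ into the free finitely generated $Z$-module $D$, which is what is wanted. Writing $a=\sum_i\alpha_ie_i$ with $\alpha_i\in k$, the content of each case is the statement $z_0\alpha_i\in Z$ for every $i$.

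For the matrix-type families $M_{n,m}$, $Q_n$, $P_n$, $osp_{n,2m}$ this is achieved by coordinatization identities. Using iterated $U$-operators and Jordan triple products $\{x,y,z\}$ with arguments taken among $a$ and the scaled matrix units $f_{pq}$, one first extracts, for each pair of indices, an element of the form $\alpha_{ij}\,e_{jj}$ by a short combination of $U$-operators and triple products, and then spreads it over the diagonal frame by $\sum_s\{e_{sj},\,\cdot\,,e_{js}\}$ to obtain (up to a correction term) $\alpha_{ij}\cdot1$; after clearing denominators this becomes a Jordan polynomial in $a$ and the $f_{pq}$ equal to a power of $z$ times $\alpha_{ij}\cdot1$. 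This is the Jordan counterpart of the associative identity $\sum_tf_{ti}af_{jt}=z^2\alpha_{ij}\cdot1$ used in Theorem~1. Being simultaneously a product of elements of $B$ and a scalar, it lies in $B\cap(k\cdot1)=Z$, so a power of $z$ times $\alpha_{ij}$ lies in $Z$; the symmetry and skew-symmetry constraints defining $P_n$ and $osp_{n,2m}$ only alter the indexing, and the same identities restricted to $\overline M$ recover the coordinates of the odd part (the blocks $\overline{M_n}$, $\overline{S_n}$, $\overline{H_n}$, the $b_i$), which are first made central by multiplying by a denominator-cleared connecting odd element, exactly as in case~5 of Theorem~2. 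Alternatively, one may treat the even part $\overline A$ by the non-super Jordan theorem of \cite{Zhel1}, or by passing to the associative subalgebra generated by $B$ in the special envelope and applying Theorem~1, at the cost of the maximal-subfield bookkeeping of its case~2 to keep the relevant centers finitely generated over $Z$.

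The two smallest families are handled directly. The superalgebra $D_t$ (case e) is four-dimensional with fully explicit multiplication, and, just as in case~3 of Theorem~2, iterated products of a generic element with the denominator-cleared idempotents and the vectors $x_0,y_0$ return a power of $z$ times each coordinate. For the Jordan superalgebra of a superform (case f) one uses the form itself: $e_i\circ e_j=(e_i,e_j)\,e$ recovers, after clearing denominators, the Gram matrix with entries in $Z$; non-degeneracy together with Cramer's rule, followed by multiplication by the nonzero central Gram determinant $z_0\in Z$, puts every $z_0\alpha_i$ in $Z$, simultaneously on the symmetric part $V_0$ and on the skew part $V_1$.

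The real difficulty is in the remaining two families, g) and h), where the associative envelope is of no help. For $H_3(k)+(\overline{S_3}\oplus\overline{\overline{S_3}})$ one combines the Peirce decomposition of $H_3(k)$ relative to the frame $e_{11},e_{22},e_{33}$ with the coordinatization theorem for $3\times3$ Hermitian matrix algebras to recover the $H_3(k)$-entries of $a$, and then uses the two module actions on $\overline{S_3}$ and $\overline{\overline{S_3}}$ to recover its odd coordinates. The main obstacle of the whole theorem is case h), the $21$-dimensional Jordan superalgebra $H_3(B(4,2))$ over a field of characteristic $3$: here there is no associative envelope, so the coordinate-extracting Jordan identities have to be written out by hand over the alternative, non-associative coordinate superalgebra $B(4,2)$, in the spirit of the coordinatization theorem for $3\times3$ Hermitian matrices over alternative coordinate algebras, while tracking the characteristic-$3$ peculiarities of $B(4,2)$; and one must still verify that the scaled entries so produced are central. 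I expect essentially all the technical work of the proof to be concentrated in this last case.
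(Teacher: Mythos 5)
Your framework is exactly the paper's: fix a $k$-basis of $J$ adapted to each of the eight families, clear denominators by some $0\neq z\in Z$, use the identification $B\cap(k\cdot 1)=Z(B)_0=Z$ to convert ``scalar and in $B$'' into ``in $Z$'', and exhibit $z_0$ with $z_0\alpha_i\in Z$ for every coordinate of every $a\in B$, so that $a\mapsto z_0a$ embeds $B$ into $\sum_i Ze_i$. The reductions you sketch also match the paper's: case a) is done by producing $\alpha_{ij}e_{jj}$ and spreading it over the diagonal frame; $osp_{n,2m}$ is cut by the idempotent $S$ into pieces handled by cases a) and c); the superform case uses an orthogonal/symplectically paired basis (so you do not even need Cramer's rule --- only the product of the diagonal Gram entries $\lambda_i$ and $\mu_i^2$); $D_t$ is a direct four-dimensional computation; and g), h) are handled by hand-built coordinate-extracting identities over $H_3(k)$ and over the alternative coordinate superalgebra $B(4,2)$.

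The gap is that the proposal stops at the roadmap precisely where the paper's proof begins. In every case the decisive object is a concrete Jordan polynomial in $a$ and the denominator-cleared basis elements that equals a fixed power of $z$ times a single coordinate, and you never produce one: not the analogue of $\sum_t f_{ti}af_{jt}=z^2\alpha_{ij}$ for $M_{n,m}(k)$ or $Q_n(k)$ (where the odd coordinate $\beta_{ij}\overline{1}$ still has to be pushed back into $Z$ by a further eleven-fold product), not the recovery of the $\overline{S_n}$ and $\overline{H_n}$ blocks of $P_n(k)$, and above all not the $H_3(k)+(\overline{S_3}\oplus\overline{\overline{S_3}})$ and $H_3(B(4,2))$ identities, which occupy most of the paper's argument and whose existence in the required form (a product of elements of $B$ on one side, a power of $z$ times a coordinate on the other, with no uncontrolled cross-terms) is exactly what has to be checked. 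You acknowledge this yourself for case h) (``one must still verify that the scaled entries so produced are central''), but that verification, carried out family by family, \emph{is} the proof; without it the argument is a correct plan rather than a demonstration. The alternative route you offer for the even part (passing to a special envelope and invoking the associative Theorem~1) is also not available uniformly, since $H_3(B(4,2))$ is exceptional and even in the special cases the center of the enveloping algebra need not be controlled by $Z$ without additional work.
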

\begin{proof} Let $B=B_0+B_1$ be a unital Jordan superalgebra,  $Z=Z(B)_{0}$, $k=Z^{-1}Z$, $J=Z^{-1}B$, $A=Z^{-1}B_0$, $M=Z^{-1}B_1$ and let $J=A+M$ be a classical simple Jordan superalgebra over a field $k$. We will consider all posible cases according to Example 3. %We will use a notation $xy$ for the product of $x$ and $y$ in all cases (sometimes, instead of $x\circ_s y$).
	
a) $J=M_{n,m}(k)$. Then $J$ has a basis $e_{ij}$, $1\le i\le n+m$, $1\le j \le n+m$. If $a\in B$ then $$a=\sum\limits_{i=1}^{n+m}\sum\limits_{j=1}^{n+m} \alpha_{ij}e_{ij}.$$
There exists an element $z\in Z$, $z\neq 0$, such that $e_{ij}=\frac{f_{ij}}{z}$ and $f_{ij}\in B$. %Let $$F(a)=(((af_{ii})f_{jj})f_{ji})f_{ii}.$$ If $i\le n$ then we have
%$$z^6\alpha_{ij}=32\sum\limits_{k=1}^{n}(F(a)f_{ik})f_{ki}+32\sum\limits_{k=n+1}^{n+m}f_{ki}(F(a)f_{ik}) +8z^2(m-n)F(a).$$
%If $i>n$ then we have
%$$z^6\alpha_{ij}=32\sum\limits_{k=1}^{n}f_{ki}(F(a)f_{ik})+32\sum\limits_{k=n+1}^{n+m}(F(a)f_{ik})f_{ki}+8z^2(n-m)F(a).$$
%If $z_0=z^6$ then $z_0\alpha_{ij}\in B\cap Z(J)_0 = Z$, so the $Z$-module $z_0B\simeq B$ embeds into the free finitely generated $Z$-module $\sum\limits_{i,j=1}^n Ze_{ij}$. 

We have $$2(a\circ_s f_{kk})\circ_s f_{kk}-a\circ_s f_{kk}=z^2\circ_s\alpha_{kk}e_{kk}$$
Let $i\neq k$. Then we have $$z^2\alpha_{kk}(e_{kk}\circ_sf_{ik})\circ_sf_{ki}=\frac{1}{4}z^4\alpha_{kk}(e_{ii}\pm e_{kk})\in B.$$
So, we have that $z^4\alpha_{kk}e_{ii}\in B$ for $1\le i \le n+m$ and $$z^4\alpha_{kk}=\sum\limits_{i=1}^{n+m}z^4\alpha_{kk}e_{ii}\in B\cap Z(J)_0=Z.$$ 
In other hand, $$\frac{1}{8}\alpha_{lk}(e_{ll}\pm e_{kk})=((a\circ_sf_{kk})\circ_sf_{ll})\circ_sf_{kl}\in B.$$
Hence $$z^4\alpha_{lk}e_{ll}=16(((a\circ_sf_{kk})\circ_sf_{ll})\circ_sf_{kl})\circ_sf_{ll}\in B$$ 
for $1\le l\le n+m$. Then for $1\le i\le n+m$ we have $$z^6\alpha_{lk}(e_{ii}\pm e_{ll})=4z^4\alpha_{lk}(e_{ll}\circ_sf_{il})\circ_sf_{li}\in B.$$
Then $z^6\alpha_{lk}e_{ii}\in B$  and $z^6\alpha_{lk}=\sum\limits_{i=1}^{n+m}z^6\alpha_{lk}e_{ii}\in B\cap Z(J)_0=Z$. Hence, we have $z^6a\in \sum_{i,j=1}^{n+m}Ze_{ij}$. So the $Z$-module $z^6B\simeq B$ embeds into the free finitely generated $Z$-module $\sum_{i,j=1}^{n+m}Ze_{ij}$.
\medskip

b) $J=Q_n(k)$.  Then $J$ has a basis $e_{ij}, \overline{e}_{ij}$, $1\le i,j\le n$. If $a\in B$ then $$a=\sum\limits_{i,j=1}^n \alpha_{ij}e_{ij}+\sum\limits_{i,j=1}^n \beta_{ij}\overline{e}_{ij}.$$
There exists an element $z\in Z$, $z\neq 0$, such that $e_{ij}=\frac{f_{ij}}{z}$, $\overline{e}_{ij}=\frac{\overline{f}_{ij}}{z}$ and $f_{ij},\overline{f}_{ij}\in B$. As in the case (a),
$$z^6(\alpha_{ij}\cdot 1+\beta_{ij}\cdot \overline{1}) \in B\cap Z(J).$$
Hence $z^6\alpha_{ij}\in B\cap Z(J)_0=Z$, $z^6\beta_{ij}\cdot\overline{1}\in B$. But 
\begin{multline*}z^{11}\beta_{ij}=\sum_{k=2}^n8(((((z^6\beta_{ij}\cdot \overline{1})\circ_sf_{12})\circ_s\overline{f}_{21})\circ_sf_{11})\circ_sf_{1k})\circ_sf_{k1} -\\- 2(n-2)z^2(((z^6\beta_{ij}\cdot\overline{1})\circ_sf_{12})\circ_s\overline{f}_{21})\circ_sf_{11}.\end{multline*}

So $z^{11}\beta_{ij}\in Z$. If $z_0=z^{11}$ then the $Z$-module $z_0B\simeq B$ embeds into the free finitely generated $Z$-module $\sum\limits_{i,j=1}^n Ze_{ij}+\sum\limits_{i,j=1}^n \overline{e}_{ij}$. 
\medskip

c) $J=P_n(k)$. Then $J$ has a basis $e_{ij}\in M_n(k)$, $1\le i,j\le n$, $\overline{e}_{ij}\in \overline{H_n(k)}$, $1\le i \le j \le n$, $\overline{\overline{e}}_{ij}\in \overline{S_n(k)}$, $1\le i<j\le n$.  If $a\in B$ then $$a=\sum\limits_{i,j=1}^n \alpha_{ij}e_{ij}+\sum\limits_{1\le i\le j\le n}\beta_{ij}\overline{e}_{ij}+\sum\limits_{1\le i< j\le n}\gamma_{ij}\overline{\overline{e}}_{ij}.$$
There exists an element $z\in Z$, $z\neq 0$, such that $e_{ij}=\frac{f_{ij}}{z}$, $\overline{e}_{ij}=\frac{\overline{f}_{ij}}{z}$, $\overline{\overline{e}}_{ij}=\frac{\overline{\overline{f}}_{ij}}{z}$ and $f_{ij},\overline{f}_{ij}\in B$. In the same way as in the case (a), one may show that $z^6\alpha_{ij}+b\in B$, where $b$ is a some element from $M$. Then $z^6\alpha_{ij}\in Z$. 

Let $Y=\begin{pmatrix}
a & s \\
h & a^T
\end{pmatrix}\in J$, i.e. $a\in M_n(k)$, $h\in H_n(k)$, $s\in S_n(k)$. We have the following identity:
$$Y\begin{pmatrix}
0 & 0 \\
E & 0
\end{pmatrix}=\frac{1}{2}\begin{pmatrix}
s & d \\
x & -s
\end{pmatrix}$$
for some $d\in\overline{S_n(k)}$, $x\in\overline{H_n(k)}$. Hence, like with $\alpha_{ij}$ we have $z^7\gamma_{ij}\in Z$ (since $\begin{pmatrix}
0 & 0\\
E & 0
\end{pmatrix}=\frac{1}{z}(f_{11}+\dots+f_{nn})$).

Using a multiplication by 
$\begin{pmatrix}
0 & f_{1i}-f_{i1} \\
0 & 0
\end{pmatrix}$, we take a matrix with $z\beta_{kj}$ on some place in the upper left quarter. So, we have $z^{7}\beta_{kj}\in Z$. If $z_0=z^7$ then the $Z$-module $z_0B\simeq B$ embeds into the free finitely generated $Z$-module $\sum\limits_{i,j=1}^n Ze_{ij}+\sum\limits_{1\le i\le j\le n} \overline{e}_{ij}+\sum\limits_{1\le i< j\le n} \overline{\overline{e}}_{ij}$.

\medskip

d) $J=osp_{n,2m}(k)$. If $x\in B$ then $$x=\begin{pmatrix}
a & b_1 & b_2 \\
-b_2^T & c_1 & c_2 \\
b_1^T & c_3 & c_1^T 
\end{pmatrix}.$$
We will use the notation $$S=\begin{pmatrix}
E & 0 & 0 \\
0 & 0 & 0 \\
0 & 0 & 0
\end{pmatrix}.$$
Then
$$\begin{pmatrix}
a & 0 & 0 \\
0 & 0 & 0 \\
0 & 0 & 0
\end{pmatrix}=(2x\circ_sS - x)\circ_sS,$$
$$\begin{pmatrix}
0 & 0 & 0 \\
0 & c_1 & c_2 \\
0 & c_3 & c_1^T
\end{pmatrix}=(2x\circ_sS-x)\circ_s(1-S).$$
The first equation allows us to use case (a). The second one allows us to use case (c). Let $$b_1=\sum\limits_{i=1}^n\sum\limits_{j=1}^m\beta_{ij}e_{ij},$$ $$b_2=\sum\limits_{i=1}^n\sum\limits_{j=1}^m\gamma_{ij}e_{ij}.$$
Then we have:\small
 \begin{multline*}
\begin{pmatrix}
0 & 0 & 0 \\
0 & 0 & \gamma_{ij}e_{kj}-\gamma_{ij}e_{jk} \\
0 & 0 & 0
\end{pmatrix}=\\=2\Bigg(\left(\left(((x\circ_sS - x)\circ_sS)\circ_s\begin{pmatrix}
e_{ii} & 0 & 0 \\
0 & 0 & 0 \\
0 & 0 & 0
\end{pmatrix}\right)\circ_s\begin{pmatrix}
0 & 0 & 0\\
0 & e_{jj} & 0 \\
0 & 0 & e_{jj}
\end{pmatrix}\right)\circ_s\\
\circ_s\begin{pmatrix}
0 & 0 & 0\\
0 & e_{rj} & 0 \\
0 & 0 & e_{jr}
\end{pmatrix}\Bigg)\circ_s\begin{pmatrix}
0 & 0 & e_{rk}\\
-e_{kr} & 0 & 0 \\
0 & 0 & 0
\end{pmatrix} ,\end{multline*}\normalsize
where $r\neq i$, $k\neq j$. 

This identity allows us to use case (c). Similarly, we can use it for $\beta_{ij}$. So, we can find $z\in Z$ such that $zB\simeq B$ embeds into a free finitely generated $Z$-module.

\medskip

e) $J=D_t$, $0\neq t\in k$. Then $J$ has a basis $e_1,e_2,x,y$. If $a\in B$ then $$a=\alpha_1 e_1+\alpha_2e_2+\beta_1x+\beta_2y.$$
There exists an element $z\in Z$, $z\neq 0$, such that $e_i=\frac{f_i}{z}$, $x=\frac{v}{z}$, $y=\frac{w}{z}$, $t=\frac{u}{z}$ and $f_i,v,w\in B$, $0\neq u\in Z$. We have the following identities:
$$uz^4\beta_1=4(((af_1)f_2)w)(uf_1+zf_2),$$
$$uz^4\beta_2=-4(((af_1)f_2)v)(uf_1+zf_2),$$
$$uz^6\alpha_1=4(((((af_1)v)f_1)f_2)w)(uf_1+zf_2),$$
$$uz^6\alpha_2=4(((((af_2)v)f_1)f_2)w)(uf_1+zf_2).$$
If $z_0=uz^6$ then $z_0B\simeq B$ embeds into the free finitely generated $Z$-module $Ze_1+Ze_2+Zx+Zy$. 

\medskip

f) $J$ is the superalgebra of non-degenerate supersymmetric superform on a superspace $V=V_0\oplus V_1$. Then $J$ has a basis $e,e_1,\dots,e_n, g_1,\dots,g_{2m}$, where $e$ is a unit, $\{e_1,\dots, e_n\}$ is an orthogonal basis of $V_0$, $(e_i,e_i)=\alpha_i$, $\{g_1,\dots,g_{2m}\}$ is a basis of $V_1$, $(g_{2i-1},g_{2i})=\beta_i\neq 0$ and $(g_i,g_j)=0$ otherwise. If $a\in B$ then $$a=\gamma e + \sum\limits_{i=1}^{n}\delta_i e_i + \sum\limits_{i=1}^{2k}\varepsilon_ig_i.$$
There exists an element $z\in Z$, $z\neq 0$, such that $e_i=\frac{f_i}{z}$, $g_i=\frac{h_i}{z}$, $\alpha_i=\frac{\lambda_i}{z}$, $\beta_i=\frac{\mu_i}{z}$ and $f_i,h_i\in B$, $\lambda_i,\mu_i\in Z$. We have the following identities:
$$z^2\lambda_1\lambda_2\gamma=(((af_1)f_1)f_2)f_2,$$
$$z\lambda_1\lambda_i\delta_i=((af_i)f_1)f_1, \quad i>1,$$
$$z\lambda_1\lambda_2\delta_1=((af_1)f_2)f_2.$$
Therefore,
$$((ah_{2k})h_{2k})h_{2k-1}=-z\mu_k^2\varepsilon_{2k-1},$$
$$((ah_{2k-1})h_{2k-1})h_{2k}=-z\mu_k^2\varepsilon_{2k}.$$
If $z_0=z^2\prod_{i=1}^{n}\lambda_i\prod_{i=1}^{m}\mu_i^2$ then $z_0B\simeq B$ embeds into the free finitely generated $Z$-module $Ze+\sum\limits_{i=1}^n Ze_i + \sum\limits_{i=1}^{2m}Zg_i$. 
\medskip

g) $J=H_3(k)+(\overline{S_3(k)}\oplus \overline{\overline{S_3(k)}})$. Then $J$ has a basis $e_{11},e_{22},e_{33}$, $e_{12},e_{13},e_{23}$, $\overline{e}_{12},\overline{e}_{13},\overline{e}_{23}$, $\overline{\overline{e}}_{12},\overline{\overline{e}}_{13},\overline{\overline{e}}_{23}.$ If $a\in B$ then $$a=\sum\limits_{1\le i\le j\le 3}\alpha_{ij}e_{ij}+\sum\limits_{1\le i<j\le 3}\beta_{ij}\overline{e}_{ij}+\sum\limits_{1\le i<j\le 3}\gamma_{ij}\overline{\overline{e}}_{ij}.$$
There exists an element $z\in Z$, $z\neq 0$, such that $e_{ij}=\frac{f_{ij}}{z}$, $\overline{e}_{ij}=\frac{\overline{f}_{ij}}{z}$, $\overline{\overline{e}}_{ij}=\frac{\overline{\overline{f}}_{ij}}{z}$ and $f_{ij}, \overline{f}_{ij}, \overline{\overline{f}}_{ij}\in B$. Let $$F_1(a)=((((a\overline{\overline{f}}_{12})\overline{\overline{f}}_{12})\overline{f}_{12})f_{11})f_{33},$$ 
$$F_2(a)=((((a\overline{\overline{f}}_{12})\overline{\overline{f}}_{12})\overline{f}_{12})f_{22})f_{33},$$
$$F_3(a)=((((a\overline{\overline{f}}_{13})\overline{\overline{f}}_{13})\overline{f}_{13})f_{33})f_{22},$$
$$G_1(a)=((((a\overline{f}_{12})\overline{f}_{12})\overline{\overline{f}}_{12})f_{11})f_{33},$$ 
$$G_2(a)=((((a\overline{f}_{12})\overline{f}_{12})\overline{\overline{f}}_{12})f_{22})f_{33},$$
$$G_3(a)=((((a\overline{f}_{13})\overline{f}_{13})\overline{\overline{f}}_{13})f_{33})f_{22}.$$
 Then we have the following identities:
$$z^7\beta_{23}=16zF_1(a)f_{13}+32(F_1(a)f_{23})f_{12}+32(F_1(a)f_{12})f_{23},$$
$$z^7\beta_{13}=16zF_2(a)f_{23}+32(F_2(a)f_{13})f_{12}+32(F_2(a)f_{12})f_{13},$$
$$z^7\beta_{12}=16zF_3(a)f_{23}+32(F_3(a)f_{12})f_{13}+32(F_3(a)f_{13})f_{12},$$
$$z^7\gamma_{23}=16zG_1(a)f_{13}+32(G_1(a)f_{23})f_{12}+32(G_1(a)f_{12})f_{23},$$
$$z^7\gamma_{13}=16zG_2(a)f_{23}+32(G_2(a)f_{13})f_{12}+32(G_2(a)f_{12})f_{13},$$
$$z^7\gamma_{12}=16zG_3(a)f_{23}+32(G_3(a)f_{12})f_{13}+32(G_3(a)f_{13})f_{12},$$
$$z^4\alpha_{12}=2z((af_{11})f_{22})f_{12}+4(((af_{11})f_{22})f_{13})f_{23}+4(((af_{11})f_{22})f_{23})f_{13},$$
$$z^4\alpha_{13}=2z((af_{11})f_{33})f_{13}+4(((af_{11})f_{33})f_{12})f_{23}+4(((af_{11})f_{33})f_{23})f_{12},$$
$$z^4\alpha_{23}=2z((af_{22})f_{33})f_{23}+4(((af_{22})f_{33})f_{12})f_{13}+4(((af_{22})f_{33})f_{13})f_{12},$$
$$z^4\alpha_{11}=2(((2af_{11}-za)f_{11})f_{12})f_{12}+2(((2af_{11}-za)f_{11})f_{13})f_{13}-z^2(2af_{11}-za)f_{11},$$
$$z^4\alpha_{22}=2(((2af_{22}-za)f_{22})f_{12})f_{12}+2(((2af_{22}-za)f_{22})f_{23})f_{23}-z^2(2af_{22}-za)f_{22}$$
$$z^4\alpha_{33}=2(((2af_{33}-za)f_{33})f_{23})f_{23}+2(((2af_{33}-za)f_{33})f_{13})f_{13}-z^2(2af_{33}-za)f_{33}.$$
If $z_0=z^4$ then $z_0B\simeq B$ embeds into the free finitely generated $Z$-module $\sum\limits_{1\le i\le j\le 3} Ze_{ij}+\sum\limits_{1\le i<j\le 3} Z\overline{e}_i + \sum\limits_{1\le i<j\le 3}Z\overline{\overline{e}}_{ij}$. 

\medskip

h) $J=H_3(B(4,2))$. If $a\in B$ then $$a=\begin{pmatrix}
\alpha_1 & b_1 & b_2 \\
\overline{b_1} & \alpha_2 & b_3 \\
\overline{b_2} & \overline{b_3} & \alpha_3
\end{pmatrix},$$
where $$b_1=\sum\limits_{i,j=1}^2\alpha_{ij}e_{ij}+\delta_1 m_1+\delta_2 m_2,$$
$$b_2=\sum\limits_{i,j=1}^2\beta_{ij}e_{ij}+\varepsilon_1 m_1+\varepsilon_2 m_2,$$
$$b_3=\sum\limits_{i,j=1}^2\gamma_{ij}e_{ij}+ \mu_1m_1+\mu_2 m_2.$$

Let us denote $$b[12]=\begin{pmatrix}
0 & b & 0 \\
\overline{b} & 0 & 0 \\
0 & 0 & 0
\end{pmatrix}, \quad b[13]=\begin{pmatrix}
0 & 0 & b \\
0 & 0 & 0 \\
\overline{b} & 0 & 0
\end{pmatrix}, \quad b[23]=\begin{pmatrix}
0 & 0 & 0 \\
0 & 0 & b \\
0 & \overline{b} & 0
\end{pmatrix},$$
if $b\in B(4,2)$. Also, $e_{ij}=1[ij]$ as usual.

Then we have the following identities:
$$\alpha_1=4(((2ae_{11}-a)e_{11})e_{12})e_{12}+4(((2ae_{11}-a)e_{11})e_{13})e_{13}-(2ae_{11}-a)e_{11},$$
$$\alpha_2=4(((2ae_{22}-a)e_{22})e_{12})e_{12}+4(((2ae_{22}-a)e_{22})e_{23})e_{23}-(2ae_{22}-a)e_{22},$$
$$\alpha_3=4(((2ae_{33}-a)e_{33})e_{23})e_{23}+4(((2ae_{33}-a)e_{33})e_{13})e_{13}-(2ae_{33}-a)e_{33},$$
$$\delta_1=32((((ae_{11})e_{22})m_2[12])e_{13})e_{13}+8((ae_{11})e_{22})m_2[12]-8(((ae_{11})e_{22})m_2[12])e_{11},$$
$$\delta_2=32((((ae_{11})e_{22})m_1[12])e_{13})e_{13}+8((ae_{11})e_{22})m_1[12]-8(((ae_{11})e_{22})m_1[12])e_{11},$$
$$\varepsilon_1=32((((ae_{11})e_{33})m_2[13])e_{12})e_{12}+8((ae_{11})e_{33})m_2[13]-8(((ae_{11})e_{33})m_2[13])e_{11},$$
$$\varepsilon_2=32((((ae_{11})e_{33})m_1[13])e_{12})e_{12}+8((ae_{11})e_{33})m_1[13]-8(((ae_{11})e_{33})m_1[13])e_{11},$$
$$\mu_1=32((((ae_{22})e_{33})m_2[23])e_{12})e_{12}+8((ae_{22})e_{33})m_2[23]-8(((ae_{22})e_{33})m_2[23])e_{22},$$
$$\mu_2=32((((ae_{22})e_{33})m_1[23])e_{12})e_{12}+8((ae_{22})e_{33})m_1[23]-8(((ae_{22})e_{33})m_1[23])e_{22},$$
$$-\alpha_{21}=32((((ae_{11})e_{22})e_{12}[12])e_{13})e_{13}+8((ae_{11})e_{22})e_{12}[12]-8(((ae_{11})e_{22})e_{12}[12])e_{11},$$
$$-\alpha_{12}=32((((ae_{11})e_{22})e_{21}[12])e_{13})e_{13}+8((ae_{11})e_{22})e_{21}[12]-8(((ae_{11})e_{22})e_{21}[12])e_{11},$$
$$-\beta_{21}=32((((ae_{11})e_{33})e_{12}[13])e_{12})e_{12}+8((ae_{11})e_{33})e_{12}[13]-8(((ae_{11})e_{33})e_{12}[13])e_{11},$$
$$-\beta_{12}=32((((ae_{11})e_{33})e_{21}[13])e_{12})e_{12}+8((ae_{11})e_{33})e_{21}[13]-8(((ae_{11})e_{33})e_{21}[13])e_{11},$$
$$-\gamma_{21}=32((((ae_{22})e_{33})e_{12}[23])e_{12})e_{12}+8((ae_{22})e_{33})e_{12}[23]-8(((ae_{22})e_{33})e_{12}[23])e_{22},$$
$$-\gamma_{12}=32((((ae_{22})e_{33})e_{21}[23])e_{12})e_{12}+8((ae_{22})e_{33})e_{21}[23]-8(((ae_{22})e_{33})e_{21}[23])e_{22},$$
$$\alpha_{11}=32((((ae_{11})e_{22})e_{22}[12])e_{13})e_{13}+8((ae_{11})e_{22})e_{22}[12]-8(((ae_{11})e_{22})e_{22}[12])e_{11},$$
$$\alpha_{22}=32((((ae_{11})e_{22})e_{11}[12])e_{13})e_{13}+8((ae_{11})e_{22})e_{11}[12]-8(((ae_{11})e_{22})e_{11}[12])e_{11},$$
$$\beta_{11}=32((((ae_{11})e_{33})e_{22}[13])e_{12})e_{12}+8((ae_{11})e_{33})e_{22}[13]-8(((ae_{11})e_{33})e_{22}[13])e_{11},$$
$$\beta_{22}=32((((ae_{11})e_{33})e_{11}[13])e_{12})e_{12}+8((ae_{11})e_{33})e_{11}[13]-8(((ae_{11})e_{33})e_{11}[13])e_{11},$$
$$\gamma_{11}=32((((ae_{22})e_{33})e_{22}[23])e_{12})e_{12}+8((ae_{22})e_{33})e_{22}[23]-8(((ae_{22})e_{33})e_{22}[23])e_{22},$$
$$\gamma_{22}=32((((ae_{22})e_{33})e_{11}[23])e_{12})e_{12}+8((ae_{22})e_{33})e_{11}[23]-8(((ae_{22})e_{33})e_{11}[23])e_{22}.$$

We have representations of basis elements of $J$ as fractions with numerators $a_1,a_2,\dots,a_{21}$ from $B$ and the common denominator $z\in Z$. Then our identities imply that $z^5B\simeq B$ embeds into the free finitely generated $Z$-module $\sum\limits_{i=1}^{21} Za_{i}$.		\end{proof}

\section{Acknowlegments} 

 The work is supported by Mathematical Center in Akademgorodok, the agreement with Ministry of Science and High Education of the Russian Federation number  075-15-2019-1613. 
 
 The reported study was funded by RFBR, project number 19-31-90055.
 
 The author is grateful to the referee for the useful comments.
 
  The author also expresses his gratitude to P.~S.~Kolesnikov, whose remarks helped to improve this paper.

\end{document}